\renewcommand*{\backref}[1]{}
\renewcommand*{\backrefalt}[4]{%
  \ifcase #1 %
    \relax
  \or
    $\uparrow$~#2.
  \else
    $\uparrow$~#2.
  \fi%
}
\newcommand{\cyrrm}[1]{\mbox{\fontencoding{OT2}\fontfamily{wncyr}\selectfont#1}}
\begin{document}
\thispagestyle{empty}
\selectlanguage{english}
\setcounter{tocdepth}{4}
\setcounter{secnumdepth}{4}
\renewcommand*{\thefootnote}{\fnsymbol{footnote}}
\begin{center}
{\huge\textbf{Noncommutative Orlicz spaces over W$^*$-algebras}}{\tiny\\\ \\\ \\}
{\LARGE Ryszard Pawe{\l} Kostecki{\tiny\\\ \\}{\small \textit{Perimeter Institute of Theoretical Physics, 31 Caroline Street North, Waterloo, Ontario N2L 2Y5, Canada}}\\{\small\vspace{3mm}\texttt{ryszard.kostecki@fuw.edu.pl}}}{\small\\\ \\}
{\today}
\end{center}
\begin{abstract}
\noindent Using the Falcone--Takesaki theory of noncommutative integration and Kosaki's canonical representation, we construct a family of noncommutative Orlicz spaces that are associated to an arbitrary W$^*$-algebra $\N$ without any choice of weight involved, and we show that this construction is functorial over the category of W$^*$-algebras with $*$-isomorphisms as arrows. Under a choice of representation, these spaces are isometrically isomorphic to Kunze's noncommutative Orlicz spaces over crossed products $\N\rtimes_{\sigma^\psi}\RR$.\\

\noindent{\scriptsize{MSC2010: 46L52, 46L51, 46M15.}\\
\noindent\scriptsize{Keywords: noncommutative Orlicz spaces, W$^*$-algebras.}}
\end{abstract}
\setcounter{footnote}{0}
\renewcommand*{\thefootnote}{\arabic{footnote}}
\section{Introduction}
The construction of a standard representation by Connes, Araki and Haagerup and its further canonical refinement by Kosaki allows one to assign a canonical $L_2(\N)$ space to every W$^*$-algebra $\N$, without any choice of a weight on $\N$ involved. This leads to a question: is it possible to develop the theory of noncommutative integration and $L_p(\N)$ spaces for arbitrary W$^*$-algebras $\N$, analogously to consideration of the Hilbert--Schmidt space as a member $\mathfrak{G}_2(\H)$ of the family of von Neumann--Schatten spaces $\mathfrak{G}_p(\H)$, and consideration of a Hilbert space $L_2(\X,\mho(\X),\tmu)$ as a member of a family of Riesz--Radon--Dunford spaces $L_p(\X,\mho(\X),\tmu)$? Falcone and Takesaki \cite{Falcone:Takesaki:2001} answered this question in the affirmative by an explicit construction, showing that to any W$^*$-algebra $\N$ there is a corresponding noncommutative $L_p(\N)$ space, and furthermore, this assignment is functorial.

In this paper we extend their result by constructing a  family of noncommutative Orlicz spaces $L_\Orlicz(\core)$ that is associated to every W$^*$-algebra $\N$ for any choice of an Orlicz function $\Orlicz$. The key aspect of this construction is its functoriality: every $*$-isomorphism of W$^*$-algebras induces a corresponding isometric isomorphism of noncommutative Orlicz spaces. We also infer basic properties of these spaces. These spaces, under a choice of any faithful normal semi-finite weight $\psi$ on $\N$, are isometrically isomorphic to Kunze's \cite{Kunze:1990} noncommutative Orlicz spaces $L_\Orlicz(\N\rtimes_{\sigma^\psi}\RR)$, which is induced by a weight-dependent unitary isomorphism $\core\iso\N\rtimes_{\sigma^\psi}\RR$. Hence, $L_\Orlicz(\core)$ are not \textit{the} canonical noncommutative Orlicz spaces over $\N$, similarly as $L_p(\X\times\RR,\mho(\X\times\RR),\tmu)$ is not the canonical $L_p$ space over a corresponding boolean algebra. Yet, we hope that our method of approaching the problem, based on the functorial properties of the Falcone--Takesaki noncommutative flow of weights, can be further extended using the ideas in \cite{Labuschagne:2013}.


Section \ref{ft.section} provides an overview of the Falcone--Takesaki integration theory over arbitrary W$^*$-algebras. A brief discussion of noncommutative Orlicz spaces is contained in Section \ref{orlicz.section}. In Section \ref{functoriality.section} we introduce our construction and show its functoriality. 

For a W$^*$ algebra $\N$, we denote: the set of semi-finite faithful nomal weights on $\N$ as $\W_0(\N)$; Connes' cocycle of $\psi,\phi\in\W_0(\N)$ as $\Connes{\psi}{\phi}{t}$; Connes' spatial quotient as $\connes{\psi}{\phi}$; the Tomita--Takesaki modular automorphism of $\psi\in\W_0(\N)$ as $\sigma^\psi$. For an extended discussion of all notions related to W$^*$-algebras and noncommutative integration we refer to \cite{Kostecki:2013} as a recent overview (close to the spirit of \cite{Sakai:1971}) which has precisely the same notation and terminology as we use here, as well as to \cite{Stratila:1981,Takesaki:2003} as standard references.
\section{Falcone--Takesaki integration on arbitrary W$^*$-algebras\label{ft.section}}
The Falcone--Takesaki approach to noncommutative integration relies on the construction and properties of the core algebra $\core$ and on Masuda's \cite{Masuda:1984} reformulation of Connes' noncommutative Radon--Nikod\'{y}m type theorem. We will first briefly recall the key notions from the relative modular theory, and then move to the construction of the core algebra and integration over it.

Consider a W$^*$-algebra $\N$ and a relation $\sim_t$ on $\N\times\W_0(\N)$ defined by \cite{Falcone:Takesaki:2001}
\begin{equation}
        (x,\psi)\sim_t(y,\phi)\iff y=x\Connes{\psi}{\phi}{t}\;\;\forall x,y\in\N\;\forall\psi,\phi\in\W_0(\N).
\label{core.equivalence}
\end{equation}
The cocycle property of $\Connes{\psi}{\phi}{t}$ implies that $\sim_t$ is an equivalence relation in $\N\times\W_0(\N)$. The equivalence class $(\N\times\W_0(\N))/\sim_t$ is denoted by $\N(t)$, and its elements are denoted by $x\psi^{\ii t}$. The operations
\begin{align}
        x\psi^{\ii t}+y\psi^{\ii t}&:=(x+y)\psi^{\ii t},\\
        \lambda(x\psi^{\ii t})&:=(\lambda x)\psi^{\ii t}\;\forall\lambda\in\CC,\\
        \n{x\psi^{\ii t}}&:=\n{x},
\end{align}
equip $\N(t)$ with the structure of the Banach space, which is isometrically isomorphic to $\N$, considered as a Banach space. By definition, $\N(0)$ a W$^*$-algebra that is trivially $*$-isomorphic to $\N$. However, for $t\neq0$ the spaces $\N(t)$ are not W$^*$-algebras. 

The operations
\begin{align}
        \cdot\,:\N(t_1)\times\N(t_2)\ni(x\psi^{\ii t_1},y\psi^{\ii t_2})&\mapsto x\sigma_{t_1}^{\psi}(y)\psi^{\ii(t_1+t_2)}\in\N(t_1+t_2),\\
        ^*\,:\N(t)\ni x\psi^{\ii t}&\mapsto\sigma_{-t}^\psi(x^*)\psi^{-\ii t}\in\N(-t),
\end{align}
equip the disjoint sum $\fell(\N):=\coprod_{t\in\RR}\N(t)$ over $\N\times\RR$ with the structure of $*$-algebra. The bijections 
\begin{equation}
        \N(t)\ni x\psi^{\ii t}\mapsto(x,t)\in\N\times\RR
\label{core.pairs.bijection}
\end{equation}
allow to endow $\fell(\N)$ with the topology induced by \eqref{core.pairs.bijection} from the product topology on $\N\times\RR$ of the weak-$\star$ topology on $\N$ and the usual topology on $\RR$. This provides the Fell's Banach $^*$-algebra bundle structure on $\fell(\N)$. One can consider the Fell bundle $\fell(\N)$ as a natural algebraic structure which enables to translate between elements of $\N(t)$ at different $t\in\RR$. In order to recover an element of $\fell(\N)$ at a given $t\in\RR$, one has to select a section $\widetilde{x}:\RR\ra\fell(\N)$ of $\fell(\N)$:
\begin{equation}
        t\mapsto x(t)\psi^{\ii t}=:\widetilde{x}(t).
\end{equation}
Consider the set $\Gamma^1(\fell(\N))$ of such cross-sections of $\fell(\N)$ that are \df{integrable} in the following sense:
\begin{enumerate}
\item[i)] for any $\epsilon>0$ and any bounded interval $I\subseteq\RR$ there exists a compact subset $Y\subseteq I$ such that $\ab{I-Y}<\epsilon$ and the restriction $Y\ni t\mapsto x(t)\in\fell(\N)$ is continuous relative to the topology induced in $\fell(\N)$ by \eqref{core.pairs.bijection},
\item[ii)] $\int_\RR\dd r\,\n{x(r)}<\infty$.
\end{enumerate}
The set $\Gamma^1(\fell(\N))$ can be endowed with a multiplication, involution, and norm,
\begin{align}
        (\widetilde{x}\widetilde{y})(t)
        &:=\int_\RR\dd r\, x(r)y(t-r)
        =\left(\int_\RR\dd r\, x(r)\sigma_r^\psi(y(t-r))\right)\psi^{\ii t},\\
  \widetilde{x}^*(t)
  &:=\widetilde{x}(-t)^*
  =\sigma_t^\psi(x(-t)^*)\psi^{\ii t},\\
  \n{\widetilde{x}}
  &:=\int_\RR\dd r\,\n{x(r)},
\end{align}
thus forming a Banach $*$-algebra, denoted by $\bundlealg(\N)$. 

Falcone and Takesaki \cite{Falcone:2000,Falcone:Takesaki:2001} constructed also a suitably defined `bundle of Hilbert spaces' over $\RR$. Let $\N=\pi(\C)$ be a von Neumann algebra representing a W$^*$-algebra $\C$ in terms of a standard representation $(\H,\pi,J,\stdcone)$. The space $\H$ can be considered as a $\N$-$(\N^\comm)^o$ bimodule\footnote{That is, a Hilbert space $\H$ equipped with a normal representation $\pi_1:\N\ra\BH$ and a normal representation $\pi_2:\N^o\ra\BH$ such that $\pi_1(\N)$ and $\pi_2(\N^o)$ commute.}, with the left action of $\N$ on $\H$ given by ordinary multiplication from the left, and with the right action of $(\N^\comm)^o$ on $\H$ defined by 
\begin{equation}
        \xi x^o:=x\xi\;\;\;\forall\xi\in\H\;\;\forall x^o\in(\N^\comm)^o.
\end{equation}
Thus, the left action of $\N$ on $\H$ is just an action of a standard representation of the underlying W$^*$-algebra $\C$, while the right action of $(\N^\comm)^o$ is provided by the corresponding antirepresentation of $\C$ (that is, by commutant of a standard representation of $\C^o$). Given arbitrary $r_1,r_2\in\RR$, $\zeta_1,\zeta_2\in\H$, $\phi_1,\phi_2\in\W_0(\N)$, and $\varphi_1,\varphi_2:\W_0((\N^\comm)^o)$ the condition
\begin{equation}      
\left(\connes{\phi_1}{\varphi_1}\right)^{\ii r_1}\zeta_1=\left(\connes{\phi_2}{\varphi_2}\right)^{\ii r_2}\zeta_2\Connes{\varphi_2}{\varphi_1}{t},
\end{equation}
defines an equivalence relation
\begin{equation}
        (r_1,\phi_1,\zeta_1,\varphi_1)\;\;\sim_t\;\;(r_2,\phi_2,\zeta_2,\varphi_2)
\label{hilb.equiv}
\end{equation}
on the set $\RR\times\W_0(\N)\times\H\times\W_0((\N^\comm)^o)$. The equivalence class of the relation \eqref{hilb.equiv} is denoted by $\H(t)$, and its elements have the form
\begin{equation}
        \phi^{\ii t}\xi=\left(\connes{\phi}{\varphi}\right)^{\ii t}\xi\varphi^{\ii t},
\end{equation}
which is equivalent to
\begin{equation}
        \phi^{\ii t}\xi\varphi^{-\ii t}=\left(\connes{\phi}{\varphi}\right)^{\ii t}\xi.
\label{connes.cocycle.as.generator}
\end{equation}
Falcone and Takesaki show that $\H(t)$ is a Hilbert space independent of the choice of weights $\phi_1,\phi_2,\varphi_1,\varphi_2$ and of the choice of $r\in\RR$. This enables to form the Hilbert space bundle over $\RR$,
$\coprod_{t\in\RR}\H(t)$, and to form the Hilbert space of square-integrable cross-sections of this bundle,
\begin{equation}
        \widetilde{\H}:=\Gamma^2\left(\coprod\limits_{t\in\RR}\H(t)\right).
\end{equation}
The Hilbert space bundle $\coprod_{t\in\RR}\H(t)$ is homeomorphic to $\H\times\RR$ for \textit{any} choice of weight $\psi\in\W_0(\N)$. The left action of $\bundlealg(\N)$ on $\widetilde{\H}$ generates a von Neumann algebra $\core$ called \df{standard core} \cite{Falcone:Takesaki:2001}. For type III$_1$ factors $\N$ the standard core $\core$ is a type II$_\infty$ factor, but in general case $\core$ is not a factor. The structure of $\core$ is independent of the choice of weight on $\N$. However, for any choice of $\psi\in\W_0(\N)$ there exists a unitary map
\begin{equation}
\coreiso_\psi:\widetilde{\H}=\Gamma^2\left(\coprod_{t\in\RR}\H(t)\right)\ra L_2(\RR,\dd t;\H)\iso\H\otimes L_2(\RR,\dd t),
\end{equation}
such that
\begin{equation}
        \coreiso_\psi(\xi)(t)=\psi^{-\ii t}\xi(t)\in\H\;\;\;\forall\xi\in\widetilde{\H}.
\end{equation}
It satisfies
\begin{align}
        (\coreiso_\psi x \coreiso_\psi^*)(\xi)(t)
        &=\sigma^\psi_{-t}(x)\xi(t),\\
        (\coreiso_\psi\psi^{\ii s}\coreiso_\psi^*)(\xi)(t)
        &=\xi(t-s),\\
        (\coreiso_\psi\phi^{-\ii s}\coreiso_\psi^*)(\xi)(t)
        &=\left(\connes{\psi}{\phi^\comm}\right)^{\ii s}.
\end{align}
for all $\xi\in L_2(\RR,\dd t;\H)$, $x\in\N$, $\phi,\psi\in\W_0(\N)$, $s,t\in\RR$. This map provides a $*$-isomorphism between the standard core $\core$ on $\widetilde{\H}$ and the crossed product $\N\rtimes_{\sigma^\psi}\RR$ on $\H\otimes L_2(\RR,\dd t)$,\footnote{See \cite{Kostecki:2013} for the notions of, and further references on, crossed product, operator valued weight, dual weight, and $\N^\ext$.}
\begin{equation}
       \coreiso_\psi^*\core\coreiso_\psi=\N\rtimes_{\sigma^\psi}\RR.
\label{core.iso.map}
\end{equation}
Using the uniqueness of the standard representation up to unitary equivalence, Falcone and Takesaki \cite{Falcone:Takesaki:2001} proved that the map $\N\mapsto\core$ extends to a functor $\VNCore$ from the category $\VNIso$ of von Neumann algebras with $*$-isomorphisms to its own subcategory $\VNsfIso$ of semi-finite von Neumann algebras with $*$-isomorphisms. 

The one-parameter automorphism group of $\fell(\N)$,
\begin{equation}
        \tilde{\sigma}_s(x\phi^{\ii t}):=\ee^{-\ii ts}x\phi^{\ii t}\;\;\;\;\forall x\phi^{\ii t}\in\N(t),
\label{tilde.sigma.def}
\end{equation}
corresponding to the unitary group $\tilde{u}(s)$ on $\widetilde{\H}$ given by
\begin{equation}
        (\tilde{u}(s)\xi)(t)=\ee^{-\ii st}\xi(t)\;\;\forall t,s\in\RR\;\forall\xi\in\widetilde{\H},
\end{equation}
extends uniquely to a group of automorphisms $\tilde{\sigma}_s:\core\ra\core$. The automorphism $\tilde{\sigma}_t$ provides a weight-independent replacement for a dual automorphism $\hat{\sigma}_t^\psi$ used in Haagerup's theory \cite{Haagerup:1979:ncLp,Terp:1981}. The triple $(\tilde{\N},\RR,\tilde{\sigma})$ is a W$^*$-dynamical system
. There exist canonical isomorphisms
\begin{align}
        \core\rtimes_{\tilde{\sigma}}\RR&\iso\N\otimes\BBB(L_2(\RR,\dd\lambda)),\\
        \core_{\tilde{\sigma}}&\iso\N.
\end{align}
The action of $\tilde{\sigma}_s$ on $\core$ is integrable over $s\in\RR$, and 
\begin{equation}
        T_{\tilde{\sigma}}:\core^+\ni x\mapsto T_{\tilde{\sigma}}(x):=\int_\RR\dd s\,\tilde{\sigma}_s(x)\in\N^\ext,
\end{equation}
is an operator valued weight from $\core$ to $\core_{\tilde{\sigma}}\iso\N$. For any $\phi\in\W(\N)$, its dual weight over $\core$ is given by
\begin{equation}
        \hat{\phi}:=\tilde{\phi}\circ T_{\tilde{\sigma}}\in\W(\core).
\end{equation}
Every $\phi\in\W_0(\N)$ can be considered as an analytic generator of the one parameter group of unitaries $\{\phi^{\ii t}\mid t\in\RR\}\subseteq\core$ acting on $\widetilde{\H}$ from the right, given by
\begin{equation}
        \phi=\exp\left(-\ii\frac{\dd}{\dd t}\left(\phi^{\ii t}\right)|_{t=0}\right).
\label{phi.in.FT.as.generator}
\end{equation}
This allows to equip $\core$ with a faithful normal semi-finite trace $\taucore_\phi:\core^+\ra[0,\infty]$, 
\begin{align}
        \taucore_\phi(x):&=\lim_{\epsilon\ra^+0}\hat{\phi}((\phi^{-1}(1+\epsilon\phi^{-1})^{-1})^{1/2}x((\phi^{-1}(1+\epsilon\phi^{-1})^{-1})^{1/2})\nonumber\\
        &=\lim_{\epsilon\ra^+0}\hat{\phi}(\phi^{-1/2}(1+\epsilon\phi^{-1})^{-1/2}x\phi^{-1/2}(1+\epsilon\phi^{-1})^{-1/2})\nonumber\\
        &=\lim_{\epsilon\ra^+0}\hat{\phi}((\phi+\epsilon)^{-1/2}x(\phi+\epsilon)^{-1/2}).
\label{canonical.trace}
\end{align}
This definition is independent of the choice of weight (e.g., $\taucore_\varphi=\taucore_\psi\;\forall\varphi,\psi\in\W_0(\N)$), which follows from the fact that 
\begin{equation}
\Connes{\taucore_\phi}{\taucore_\varphi}{t}=\Connes{\taucore_\phi}{\tilde{\varphi}}{t}
\Connes{\tilde{\varphi}}{\tilde{\psi}}{t}
\Connes{\tilde{\psi}}{\taucore_\varphi}{t}=\varphi^{-\ii t}\Connes{\varphi}{\psi}{t}\psi^{\ii t}=\varphi^{-\ii t}\varphi^{\ii t}\psi^{-\ii t}\psi^{\ii t}
=1
\end{equation}
for all $\phi,\varphi\in\W_0(\N)$ and for all $t\in\RR$. This allows to write $\taucore$ instead of $\taucore_\varphi$. Moreover, $\taucore$ has the scaling property
\begin{equation}
        \taucore\circ\tilde{\sigma}_s=\ee^{-s}\taucore\;\;\;\;\forall s\in\RR.
\label{scaling}
\end{equation}
This allows to call $\taucore$ a \df{canonical trace} of $\core$. It will play the role analogous to a Haagerup's trace $\tilde{\tau}_\psi$ on a crossed product algebra $\precore=\N\rtimes_{\sigma^\psi}\RR$ \cite{Haagerup:1979:ncLp,Terp:1981}. Nevertheless, the definition \eqref{canonical.trace} is not a straightforward generalisation of Haagerup's trace. It is another type of `perturbed' construction of a weight, which is designed in this case for the purpose of direct elimination of the dependence of $\taucore_\phi$ on $\phi$.
\section{Noncommutative Orlicz spaces\label{orlicz.section}}
In this section we will first briefly recall the key notions from the theory of commutative Orlicz spaces \cite{Orlicz:1932,Orlicz:1936}, and then we will move to discussion of the noncommutative version of this theory. For a detailed treatment of the theory of commutative Orlicz spaces, as well as the associated theory of modular spaces, see \cite{Nakano:1950,Nakano:1951:book,Nakano:1951,Zaanen:1953,Krasnoselskii:Rutickii:1958,Lindenstrauss:Tzafriri:1977:1979,Musielak:1983,Maligranda:1989,Rao:Ren:1991,Chen:1996,Rao:Ren:2002}.

If $X$ is a vector space over $\KK\in\{\RR,\CC\}$, and $\Orlicz:X\ra[0,\infty]$ is a convex function satisfying
\begin{enumerate}
\item[1)] $\Orlicz(0)=0$,
\item[2)] $\Orlicz(\lambda x)=0$ $\forall\lambda>0$ $\limp$ $x=0$,
\item[3)] $\ab{\lambda}=1$ $\limp$ $\Orlicz(\lambda x)=\Orlicz(x)$,
\end{enumerate}
then $\Orlicz$ is called a \df{pseudomodular function} \cite{Musielak:Orlicz:1959}. If 2) is replaced by
\begin{enumerate}
\item[2')] $\Orlicz(x)=0$ $\limp$ $x=0$,
\end{enumerate}
then $\Orlicz$ is called a \df{modular function} \cite{Nakano:1950,Nakano:1951:book,Nakano:1951}. If 2) is replaced by 
\begin{enumerate}
\item[2'')] $x\neq0$ $\limp$ $\lim_{\lambda\ra+\infty}\Orlicz(\lambda x)=+\infty$,
\end{enumerate}
then $\Orlicz$ is called a \df{Young function} \cite{Young:1912,Birnbaum:Orlicz:1930,Birnbaum:Orlicz:1931}. A Young function $f$ on $\RR$ is said to satisfy: \df{local $\triangle_2$ condition} if{}f \cite{Birnbaum:Orlicz:1930,Birnbaum:Orlicz:1931}
\begin{equation}
	\exists\lambda>0\;\;
	\exists x_0\geq0\;\;
	\forall x\geq x_0\;\;
	f(2x)\leq\lambda f(x);
\label{local.delta.two.condition}
\end{equation}
 \df{global $\triangle_2$ condition} if{}f $x_0$ in \eqref{local.delta.two.condition} is set to $0$. A convex function $f:\RR\ra\RR^+$ is called \df{N-function} \cite{Birnbaum:Orlicz:1931} if{}f $\lim_{x\ra^+0}\frac{f(x)}{x}=0$ and $\lim_{x\ra+\infty}\frac{f(x)}{x}=+\infty$. Every Young function $f$ allows do define a \df{Young--Birnbaum--Orlicz dual} \cite{Birnbaum:Orlicz:1931,Mandelbrojt:1939}
\begin{equation}
	f^\Young:\RR\ni y\mapsto f^\Young(y):=\sup_{x\geq0}\{x\ab{y}-f(x)\}\in[0,\infty].
\end{equation}
Every YBO dual is a nondecreasing Young function, and each pair $(f,f^\Young)$ satisfies \df{Young's inequality} \cite{Young:1912}
\begin{equation}
	xy \leq f(x)+f^\Young(y)\;\;\forall x,y\in\RR.
\end{equation}
If $f$ is also an N-function, then $f^\Young{}^\Young=f$. Every modular function $\Orlicz:X\ra[0,\infty]$ determines a \df{modular space} \cite{Nakano:1950,Nakano:1951:book}
\begin{equation}
	X_\Orlicz:=\{x\in X\mid\lim_{\lambda\ra^+0}\Orlicz(\lambda x)=0\}
\end{equation}
and the \df{Morse--Transue--Nakano--Luxemburg norm} on $X_\Orlicz$ \cite{Morse:Transue:1950,Nakano:1951:book,Luxemburg:1955,Weiss:1956},
\begin{equation}
	 \n{\cdot}_\Orlicz:X_\Orlicz\ni x\mapsto\n{x}_\Orlicz:=\inf\{\lambda>0\mid\Orlicz(\lambda^{-1}x)\leq 1\}\in\RR^+,
\end{equation}
which allows to define a Banach space
\begin{equation}
	L_\Orlicz(X):=\overline{X_\Orlicz}^{\n{\cdot}_\Orlicz}.
\end{equation}

The noncommutative Orlicz spaces associated with the algebra $\BH$ of bounded operators on a Hilbert space $\H$ were implicitly introduced by Schatten \cite{Schatten:1960} as ideals in $\BH$ generated by the so-called symmetric gauge functions, and were studied in more details by Gokhberg and Kre\u{\i}n \cite{Gokhberg:Krein:1965} (see also \cite{Grothendieck:1955}). First explicit study of those ideals which are direct noncommutative analogues of Orlicz spaces is due to Rao \cite{Rao:1971,Rao:Ren:1991}, where $\Orlicz$ is assumed to be a  continuous modular function, $\Orlicz(\ab{x})$ for $x\in\BH$ is understood in terms of the spectral representation, an analogue of the MNTL norm reads
\begin{equation}
	\BH\ni x\mapsto\n{x}_\Orlicz:=\inf\left\{\lambda>0\mid\tr\left(\Orlicz\left(\frac{\ab{x}}{\lambda}\right)\right)\leq1\right\},
\end{equation}
while the noncommutative Orlicz space is defined as
\begin{equation}
	\schatten_\Orlicz(\H):=\{x\in\BH\mid\n{x}_\Orlicz<\infty\}.
\end{equation}
The generalisation of Orlicz spaces to semi-finite W$^*$-algebras $\N$ equipped with a faithful normal semi-finite trace $\tau$ were proposed by Muratov \cite{Muratov:1978,Muratov:1979}, Dodds, Dodds, and de Pagter \cite{Dodds:Dodds:dePagter:1989}, and Kunze \cite{Kunze:1990}. Two latter constructions are based on the results of Fack and Kosaki \cite{Fack:Kosaki:1986}. Given any $y\in\MMM(\N,\tau)$, the \df{rearrangement function} is defined as \cite{Grothendieck:1955} (see also \cite{Yeadon:1975})
\begin{equation}
	\rearr{y}{\tau}:[0,\infty[\,\ni t\mapsto\rearr{y}{\tau}(t):=\inf\{s\geq0\mid\tau(\pvm^{\ab{x}}(]s,+\infty[)\leq t\}\in[0,\infty].
\end{equation}
If $x\in\MMM(\N,\tau)^+$ and $f:[0,\infty[\ra[0,\infty[$ is a continuous nondecreasing function, then \cite{Fack:Kosaki:1986}
\begin{align}
	\tau(f(x))&=\int_0^\infty\dd t f(\rearr{x}{\tau}(t)),\label{FK.property.one}\\
	\rearr{f(x)}{\tau}(t)&=f(\rearr{x}{\tau}(t))\;\;\forall t\in\RR^+.\label{FK.property.two}
\end{align}
Using this result, Kunze \cite{Kunze:1990} defined a noncommutative Orlicz space associated with an arbitrary Orlicz function $\Orlicz$ as
\begin{equation}
	L_\Orlicz(\N,\tau):=\Span_\CC\{x\in\MMM(\N,\tau)\mid\tau(\Orlicz(\ab{x}))\leq1\},
\label{Kunze.nc.Orlicz}
\end{equation}
equipped with a quantum version of a MNTL norm,
\begin{equation}
	\n{\cdot}_\Orlicz:\MMM(\N,\tau)\ni x\mapsto\inf\{\lambda>0\mid\tau(\Orlicz(\lambda^{-1}\ab{x}))\leq1\},
\end{equation}
under which, as he proves, \eqref{Kunze.nc.Orlicz} is a Banach space. From linearity, it follows that
\begin{equation}
	L_\Orlicz(\N,\tau)=\{x\in\MMM(\N,\tau)\mid\exists\lambda>0\;\;\tau(\Orlicz(\lambda\ab{x}))<\infty\}.
\end{equation}
On the other hand,  Dodds, Dodds, and de Pagter \cite{Dodds:Dodds:dePagter:1989} defined (implicitly) a noncommutative Orlicz space associated with $(\N,\tau)$ and an Orlicz function $\Orlicz$ as
\begin{equation}
	L_\Orlicz(\N,\tau):=\{x\in\MMM(\N,\tau)\mid\rearr{x}{\tau}\in L_\Orlicz(\RR^+,\mho_{\mathrm{Borel}}(\RR^+),\dd\lambda)\}.
\label{DDdP.nc.Orlicz}
\end{equation}
By means of \eqref{FK.property.one} and \eqref{FK.property.two}, these two definitions are equivalent\footnote{See a discussion in \cite{Labuschagne:Majewski:2011,Labuschagne:2013} of the case when continuity of $\Orlicz$ is relaxed to continuity on $[0,x_\Orlicz[$ and left continuity at $x_\Orlicz$ with $x_\Orlicz\neq+\infty$.}. Kunze \cite{Kunze:1990} showed that, for $\Orlicz$ satisfying global $\triangle_2$ condition,
\begin{align}
	L_\Orlicz(\N,\tau)&=\{x\in\MMM(\N,\tau)\mid\tau(\Orlicz(\ab{x}))<\infty\},\\
	L_\Orlicz(\N,\tau)&=E_\Orlicz(\N,\tau):=\overline{\N\cap L_\Orlicz(\N,\tau)}^{\n{\cdot}_\Orlicz},\\
	(L_\Orlicz(\N,\tau))^\banach&\iso L_{\Orlicz^\Young}(\N,\tau).
\end{align}
In \cite{Ayupov:Chilin:Abdullaev:2012} it is shown that if $\tau_1$ and $\tau_2$ are faithful normal semi-finite traces on a semi-finite W$^*$-algebra $\N$, and $\Orlicz$ is an Orlicz function satisfying global $\triangle_2$ condition, then $L_\Orlicz(\N,\tau_1)$ and $L_\Orlicz(\N,\tau_2)$ are isometrically isomorphic. Further analysis of the structure of $L_\Orlicz(\N,\tau)$ spaces in the context of modular function was provided by Sadeghi \cite{Sadeghi:2012}, who showed that the map $\MMM(\N,\tau)\ni x\mapsto \tau(\Orlicz(\ab{x}))\in[0,\infty]$ is a modular function for any Orlicz function $\Orlicz$. He also notes that the results of \cite{Chilin:Krygin:Sukochev:1992} and \cite{Dodds:Dodds:dePagter:1993} allow to infer, respectively, the uniform convexity and reflexivity of the spaces $(L_\Orlicz(\N,\tau),\n{\cdot}_\Orlicz)$ from the corresponding properties of the commutative Orlicz spaces $(L_\Orlicz(\RR^+,\mho_{\mathrm{Borel}}(\RR^+),\dd\lambda),\n{\cdot}_\Orlicz)$. This leads to\footnote{The statement of the sufficient condition for reflexivity in Collorary 4.3 of \cite{Sadeghi:2012} is missing the requirement of the global $\triangle_2$ condition for $\Orlicz^\Young$.}
\begin{corollary}
 $(L_\Orlicz(\N,\tau),\n{\cdot}_\Orlicz)$ is: uniformly convex if $\Orlicz$ is uniformly convex and satisfies global $\triangle_2$ condition; reflexive if $\Orlicz$ and $\Orlicz^\Young$ satisfy global $\triangle_2$ condition.
\end{corollary}

Al-Rashed and Zegarli\'{n}ski \cite{AlRashed:Zegarlinski:2007,AlRashed:Zegarlinski:2011} proposed a construction of a family of noncommutative Orlicz spaces associated with a faithful normal state on a countably finite W$^*$-algebra. Ayupov, Chilin and Abdullaev \cite{Ayupov:Chilin:Abdullaev:2012} proposed the construction of a family of noncommutative Orlicz spaces $L_\Orlicz(\N,\psi)$ for a semi-finite W$^*$-algebra $\N$, a faithful normal locally finite weight $\psi$, and an Orlicz function $\Orlicz$ satisfying global $\triangle_2$ condition. This construction extends Trunov's theory of $L_p(\N,\tau)$ spaces \cite{Trunov:1979,Trunov:1981,Zolotarev:1988}. Labuschagne \cite{Labuschagne:2013} provided a construction of the family of noncommutative Orlicz spaces $L_\Orlicz(\N,\psi)$ associated with an arbitrary W$^*$-algebra and a faithful normal semi-finite weight $\psi$.  The problem of the canonical (weight-independent and functorial) construction of  noncommutative Orlicz spaces over arbitrary W$^*$-algebras remains still open.

\section{Orlicz spaces over standard core\label{functoriality.section}}

\begin{definition}
For an arbitrary W$^*$-algebra $\N$ and arbitrary Orlicz function $\Orlicz$, let a \df{noncommutative core Orlicz space} be defined as a vector space
\begin{equation}
	L_\Orlicz(\core):=\{x\in\MMM(\core,\taucore)\mid\exists\lambda>0\;\;\taucore(\Orlicz(\lambda\ab{x}))<\infty\},\label{nc.Orlicz.space}
\end{equation}
equipped with the norm
\begin{equation}
	\n{\cdot}_\Orlicz:\MMM(\core,\taucore)\ni x\mapsto\inf\{\lambda>0\mid\taucore(\Orlicz(\lambda^{-1}\ab{x}))\leq1\}.\label{nc.Orlicz.norm}
\end{equation}
In addition, we define
\begin{equation}
	E_\Orlicz(\core):=\overline{\N\cap L_\Orlicz(\core)}^{\n{\cdot}_\Orlicz}.
\end{equation}
\end{definition}
Because $\core$ is a semi-finite von Neumann algebra, while $\taucore$ is a faithful normal semi-finite trace on $\core$, all above results on the Banach space structure of $L_\Orlicz(\N,\tau)$ immediately apply to $L_\Orlicz(\core)$. In particular, if $\Orlicz$ satisfies global $\triangle_2$ condition, then $L_\Orlicz(\core)=E_\Orlicz(\core)$ and $L_\Orlicz(\core)^\banach\iso L_{\Orlicz^\Young}(\core)$. Moreover: if $\Orlicz$ is also uniformly convex, then $L_\Orlicz(\core)$ is uniformly convex and $L_{\Orlicz^\Young}(\core)$ is uniformly Fr\'{e}chet differentiable; if $\Orlicz^\Young$ also satisfies global $\triangle_2$ condition, then $L_\Orlicz(\core)$ is reflexive. 
For any choice of a normal semi-finite weight $\psi$ on $\N$, $\core$ can be represented as $\N\rtimes_{\sigma^\psi}\RR$ by means of \eqref{core.iso.map}. Hence, for any choice of $\psi$, $L_\Orlicz(\core)$ can be represented as isometrically isomorphic to Kunze's noncommutative Orlicz spaces $L_\Orlicz(\N\rtimes_{\sigma^\psi}\RR)$. 


In what follows, will first recall Kosaki's construction of canonical representation of W$^*$-algebra. Then we will use it to extend functoriality of the Falcone--Takesaki noncommutative flow of weights. (While Kosaki's construction of canonical $L_2(\N)$ spaces uses normal states instead of elements of $\W_0(\N)$ applied in the Falcone--Takesaki $L_2(\N)$, the algebraic character of its structure allows for using it for the purpose of canonical representation of any W$^*$-algebra as a von Neumann algebra, which is the starting point required for the Falcone--Takesaki construction.) Finally, we will show that $*$-isomorphisms of W$^*$-algebras are canonically mapped to isometric isomorphisms of $L_\Orlicz(\core)$ spaces. 

Following Kosaki \cite{Kosaki:1980:PhD}, consider new addition and multiplication structure on $\N_\star^+$,
\begin{align}
        \lambda\sqrt{\phi}&=
        \sqrt{\lambda^2\phi}
        \;\;\forall\lambda\in\RR^+
        \;\forall\phi\in\N_\star^+,
        \label{Kosaki.new.multiplication}\\
        \sqrt{\phi}+\sqrt{\psi}&=
        \sqrt{(\phi+\psi)(y^*\,\cdot\,y)}
        \;\;\forall\phi,\psi\in\N_\star^+,
        \label{Kosaki.new.addition}
\end{align}
where
\begin{equation}
        y:=\Connes{\phi}{(\phi+\psi)}{-\ii/2}+\Connes{\psi}{(\phi+\psi)}{-\ii/2},
\end{equation}
and $\sqrt{\phi}$ is understood as a \textit{symbol} denoting the element $\phi$ of $\N_\star^+$ whenever it is subjected to the above operations instead of `ordinary' addition and multiplication on $\N_\star^+$. A `noncommutative Hellinger integral' on $\N_\star^+$,
\begin{equation}
        \kosaki{\phi}{\psi}:=(\phi+\psi)\left(\Connes{\psi}{(\phi+\psi)}{-\ii/2}^*\Connes{\phi}{(\phi+\psi)}{-\ii/2}\right)
\end{equation}
is a positive bilinear symmetric form on $\N_\star^+$ with respect to the operations defined by \eqref{Kosaki.new.multiplication} and \eqref{Kosaki.new.addition}. Consider an equivalence relation $\sim_\surd$ on pairs $(\sqrt{\phi},\sqrt{\psi})\in\N_\star^+\times\N_\star^+$,
\begin{equation}
        (\sqrt{\phi_1},\sqrt{\psi_1})\sim_\surd(\sqrt{\phi_2},\sqrt{\psi_2})\;\;\iff\;\;\sqrt{\phi_1}+\sqrt{\psi_2}=\sqrt{\phi_2}+\sqrt{\psi_1}.
\end{equation}
The set of equivalence classes $\N_\star^+\times\N_\star^+/\sim_\surd$ can be equipped with a real vector space structure, provided by
\begin{align}
(\sqrt{\phi_1},\sqrt{\phi_2})_\surd+(\sqrt{\psi_1},\sqrt{\psi_2})_\surd&:=(\sqrt{\phi_1}+\sqrt{\psi_1},\sqrt{\phi_2}+\sqrt{\psi_2})_\surd,\\
\lambda\cdot(\sqrt{\phi},\sqrt{\psi})_\surd&:=
        \left\{
                \begin{array}{ll}
                        (\lambda\sqrt{\phi},\lambda\sqrt{\psi})_\surd&:\lambda\geq0\\
                        ((-\lambda)\sqrt{\psi},(-\lambda)\sqrt{\phi})_\surd&:\lambda<0,
                \end{array}
        \right.
\end{align}
where $(\sqrt{\phi},\sqrt{\psi})_\surd$ denotes an element of $\N_\star^+\times\N_\star^+/\sim_\surd$. The real vector space $(\N_\star^+\times\N_\star^+/\sim_\surd,+,\cdot)$ will be denoted $V$. The map
\begin{equation}
        \N_\star^+\ni\phi\mapsto(\sqrt{\phi},0)_\surd\in V
\label{positive.predual.canonical.embedding}
\end{equation}
is injective, positive and preserves addition and multiplication by positive scalars. Its image in $V$ will be denoted by $L_2(\N)^+$. A function
\begin{equation}
        \s{\cdot,\cdot}_\surd:V\times V\ra\RR,
\end{equation}
\begin{align}
        \s{(\sqrt{\phi_1},\sqrt{\phi_2})_\surd,(\sqrt{\psi_1},\sqrt{\psi_2})_\surd}_\surd:&=
        \kosaki{\phi_1}{\psi_1}+
        \kosaki{\phi_1}{\psi_2}\nonumber\\&+
        \kosaki{\phi_2}{\psi_1}+
        \kosaki{\phi_2}{\psi_2},
\end{align}
is an inner product on $V$, and $(V,\s{\cdot,\cdot}_\surd)$ is a real Hilbert space with respect to it, denoted $L_2(\N;\RR)$. The \df{canonical Hilbert space} is defined as a complexification of the Hilbert space $L_2(\N;\RR)$,
\begin{equation}
        L_2(\N):=L_2(\N;\RR)\otimes_\RR\CC.
\end{equation}
The space $L_2(\N)^+$ is a self-polar convex cone in $L_2(\N)$, and, by \eqref{positive.predual.canonical.embedding}, it is an embedding of $\N_\star^+$ into $L_2(\N)$. The elements of $L_2(\N)^+$ will be denoted $\phi^{1/2}$, where $\phi\in\N^+_\star$. Every element of $L_2(\N)$ can be expressed as a linear combination of four elements of $L_2(\N)^+$. The antilinear conjugation $J_\N:L_2(\N)\ra L_2(\N)$ is defined by
\begin{equation}
        J_\N(\xi+\ii\zeta)=\xi-\ii\zeta\;\;\forall\xi,\zeta\in L_2(\N;\RR).
\end{equation}
The quadruple $(L_2(\N),\N,J_\N,L_2(\N)^+)$ is a standard form of $\N$, called a \df{canonical standard form} of $\N$. 

A bounded generator $\der_\N(x):=\frac{\dd}{\dd t}\left(f(\ee^{tx})\right)|_{t=0}$ of the norm continuous one parameter group of automorphisms
\begin{equation}
        \RR\ni t\mapsto f(\ee^{tx})\in\BBB(L_2(\N))\;\;\forall x\in\N,
\end{equation}
where $\N\ni x\mapsto f(x)\in\BBB(L_2(\N))$ is defined as a unique extension of the bounded linear function
\begin{equation}
        L_2(\N)^+\ni\phi^{1/2}\mapsto
        \left(\phi\left(
        \sigma^\phi_{+\ii/2}(x)x^*\,\cdot\,x\sigma^\phi_{-\ii/2}(x^*)
        \right)\right)^{1/2}\in L_2(\N)^+\;\;
        \forall x\in\N\;\forall\phi\in\N_\star^+,
\end{equation}
determines a map
\begin{equation}
        \der_\N:\N\ni x\mapsto\der_\N(x)\in\BBB(L_2(\N)),
\end{equation}
which is a homomorphism of real Lie algebras: for all $x,y\in\N$ and for all $\lambda\in\RR$,
\begin{align}
        [\der_\N(x),\der_\N(y)]&=\der_\N([x,y]),\\
        \lambda\der_\N(x)&=\der_\N(\lambda x).
\end{align}
The faithful normal representation $\pi_\N:\N\ra\BBB(L_2(\N))$,
\begin{equation}
        \pi_\N(x):=\textstyle\frac{1}{2}(\der_\N(x)-\ii\der_\N(\ii x)),
\end{equation}
determines a standard representation $(L_2(\N),\pi_\N,J_\N,L_2(\N)^+)$ of a W$^*$-algebra $\N$, called a \df{canonical representation} of $\N$. Every $*$-isomorphism $\varsigma:\N_1\ra\N_2$ of W$^*$-algebras $\N_1,\N_2$ determines a unique unitary equivalence $u_\varsigma:L_2(\N_2)\ra L_2(\N_1)$ satisfying $u_\varsigma(L_2(\N_2)^+)=L_2(\N_1)^+$, $u_\varsigma^*J_{\N_2}u_\varsigma=J_{\N_1}$, and such that $\Ad(u_\varsigma^*)$ is a unitary implementation of $\varsigma$. This means that Kosaki's construction of canonical representation defines a functor $\CanRep$ from the category $\WsIso$ of W$^*$-algebras with $*$-isomorphisms to the category $\StdRep$ of standard representations with standard unitary equivalences. It also allows to define a functor $\CanVN$ from the category $\Wsn$ of W$^*$-algebras with normal $*$-homomorphisms to the category $\VNn$ of von Neumann algebras with normal $*$-homomorphisms. The functor $\CanVN$ assigns $\pi_\N(\N)$ to each $\N$, and normal $*$-homomorphism $\pi_\N\circ\varsigma\circ\pi_\N^{-1}:\N_1\ra\N_2$ to each normal $*$-homomorphism $\varsigma:\N_1\ra\N_2$ (which is well defined due to faithfulness of $\pi_\N$). Let $\FrgHlb:\VNn\ra\Wsn$ be the forgetful functor which forgets about Hilbert space structure that underlies von Neumann algebras and their normal $*$-homomorphisms. Due to Sakai's theorem \cite{Sakai:1956}, $\CanVN$ and $\FrgHlb$ form the equivalence of categories,
\begin{equation}
        \FrgHlb\circ\CanVN\iso\id_{\Wsn},\;\;\CanVN\circ\FrgHlb\iso\id_{\VNn}.
\label{Sakai.Kosaki.duality}
\end{equation}

Consider the category $\WstarCovRTr$ of quadruples $(\N,\RR,\alpha,\tau)$, where $\N$ is a semi-finite W$^*$-algebra, $\tau$ is a faithful normal semi-finite trace on $\N$, and $(\N,\RR,\alpha)$ is a W$^*$-dynamical system, with morphisms
\begin{equation}
        (\N_1,\RR,\alpha^1,\tau_1)\ra
        (\N_2,\RR,\alpha^2,\tau_2)
\end{equation}
given by such $*$-isomorphisms $\varsigma:\N_1\ra\N_2$ which satisfy
\begin{align}
        \varsigma\circ\alpha^1_t&=\alpha^2_t\circ\varsigma\;\;\forall t\in\RR,\label{iso.covariance}\\
        \tau_1&=\tau_2\circ\varsigma\label{trace.covariance}.
\end{align}
Falcone and Takesaki call the quadruple $(\core,\RR,\tilde{\sigma},\taucore)$ a \df{noncommutative flow of weights}, and prove that every $*$-isomorphism $\varsigma:\N_1\ra\N_2$ of von Neumann algebras extends to a $*$-isomorphism $\widetilde{\varsigma}:\core_1\ra\core_2$ satisfying \eqref{iso.covariance} and \eqref{trace.covariance}. This defines a functor 
\begin{equation}
        \FTflow:\VNIso\ra\WstarCovRTr,
\end{equation}
where $\VNIso$ is a category of von Neumann algebras with $*$-isomorphisms. The restriction of $\tilde{\sigma}$ to the center $\zentr_\core$ is the Connes--Takesaki flows of weights $(\zentr_\core,\RR,\tilde{\sigma}|_{\zentr_\core})$ \cite{Falcone:Takesaki:2001}. Hence, the relationship between the Falcone--Takesaki noncommutative flow of weights and the Connes--Takesaki flow of weights can be summarised in terms of the commutative diagram
\begin{equation}
\xymatrix{
        \WsIso
        \ar[rr]^{\CanVN}
        &&
        \VNIso
        \ar[rr]^{\FTflow}
        &&
        \WstarCovRTr
        \ar[d]^{\zentr\circ\ForgTr}
        \\
        \WsfIIIIso
        \ar@{ >->}[u]
        \ar[rr]_{\CanVN}
        &&
        \VNfIIIIso
        \ar@{ >->}[u]
        \ar[rr]_{\CTflow}
        &&
        \WstarCovR,     
}
\label{ctft.cat.diag}
\end{equation}
where $\WsfIIIIso$ (respectively, $\VNfIIIIso$) is a category of type III factor W$^*$-algebras (respectively, von Neumann algebras) with $*$-isomorphisms, $\ForgTr$ denotes the forgetful functor that forgets about traces, while $\zentr:\WstarCovR\ra\WstarCovR$ is an endofunctor that assigns an object $(\zentr_\N,\RR,\alpha|_{\zentr_\N})$ to each $(\N,\RR,\alpha)$, and assigns a morphism $\varsigma^{12}_\zentr$ such that
\begin{equation}
        \varsigma^{12}_\zentr\circ\alpha^1_t|_{\zentr_{\N_1}}=
        \alpha^2_t|_{\zentr_{\N_2}}\circ\varsigma^{12}_\zentr
\end{equation}
to each $\varsigma:(\N_1,\RR,\alpha^1)\ra(\N_2,\RR,\alpha^2)$. 

\begin{proposition}
Every $*$-isomorphism $\varsigma:\N_1\ra\N_2$ of W$^*$-algebras gives rise to a corresponding isometric isomorphism $L_\Orlicz(\core_1)\ra L_\Orlicz(\core_2)$.
\end{proposition}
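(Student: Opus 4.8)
The plan is to transport the whole construction along the core-level $*$-isomorphism produced by the Falcone--Takesaki functor, and then to observe that trace preservation upgrades this to an isometry of Orlicz spaces. First I would feed $\varsigma$ through the composite $\FTflow\circ\CanVN$: Kosaki's functor $\CanVN$ turns $\varsigma:\N_1\ra\N_2$ into the $*$-isomorphism $\pi_{\N_2}\circ\varsigma\circ\pi_{\N_1}^{-1}$ of the canonical von Neumann algebra representations, and $\FTflow$ then yields a $*$-isomorphism $\widetilde{\varsigma}:\core_1\ra\core_2$ of the associated standard cores which, by construction of the functor into $\WstarCovRTr$, is covariant for $\tilde{\sigma}$ in the sense of \eqref{iso.covariance} and, crucially, preserves the canonical trace, $\taucore_1=\taucore_2\circ\widetilde{\varsigma}$, as in \eqref{trace.covariance}. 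Since $\core$ and $\taucore$ are both weight-independent, $\widetilde{\varsigma}$ is attached to $\varsigma$ without any auxiliary choices.

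Second, I would extend $\widetilde{\varsigma}$ from the bounded algebra $\core_1$ to the $\taucore$-measurable operators. Because $\widetilde{\varsigma}$ is a normal $*$-isomorphism carrying the faithful normal semi-finite trace $\taucore_1$ to $\taucore_2$, it maps spectral projections to spectral projections of equal trace, so it is a homeomorphism of the measure topologies and extends uniquely to a $*$-isomorphism $\widetilde{\varsigma}:\MMM(\core_1,\taucore_1)\ra\MMM(\core_2,\taucore_2)$ (respecting closures of sums and products) which preserves the extended trace on the positive cone. The key consequence is that $\widetilde{\varsigma}$ intertwines polar decomposition and the Borel functional calculus, $\widetilde{\varsigma}(\ab{x})=\ab{\widetilde{\varsigma}(x)}$ and $\widetilde{\varsigma}(\pvm^{\ab{x}}(\Y))=\pvm^{\ab{\widetilde{\varsigma}(x)}}(\Y)$ for every Borel set $\Y$. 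Hence the distribution functions $s\mapsto\taucore(\pvm^{\ab{\cdot}}(]s,\infty[))$ agree, so the rearrangement functions coincide, $\rearr{\widetilde{\varsigma}(x)}{\taucore_2}=\rearr{x}{\taucore_1}$.

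Third, I would read off the Orlicz structure. For any Orlicz function $\Orlicz$ and any $\lambda>0$, compatibility of $\widetilde{\varsigma}$ with the functional calculus and with the trace gives $\taucore_2(\Orlicz(\lambda\ab{\widetilde{\varsigma}(x)}))=\taucore_1(\Orlicz(\lambda\ab{x}))$; equivalently, by \eqref{FK.property.one}--\eqref{FK.property.two} and the equality of rearrangement functions, the quantities controlling \eqref{nc.Orlicz.space} and \eqref{nc.Orlicz.norm} depend only on $\rearr{x}{\taucore}$, which is preserved. Therefore $x\in L_\Orlicz(\N_1)$ if{}f $\widetilde{\varsigma}(x)\in L_\Orlicz(\N_2)$, and $\n{\widetilde{\varsigma}(x)}_\Orlicz=\n{x}_\Orlicz$, so the restriction $\widetilde{\varsigma}|_{L_\Orlicz(\N_1)}:L_\Orlicz(\N_1)\ra L_\Orlicz(\N_2)$ is the desired isometric isomorphism, with inverse supplied by $\widetilde{\varsigma^{-1}}$.

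I expect the main obstacle to be the second step, namely the clean extension of the trace-preserving $*$-isomorphism to the algebras of $\taucore$-measurable operators together with the verification that it commutes with the functional calculus and the (extended, possibly infinite) trace on unbounded positive elements. All of this is standard once trace preservation at the bounded level is in hand, and the genuinely essential input is exactly the trace-covariance \eqref{trace.covariance} delivered by $\FTflow$: it is what guarantees that the canonical trace is transported correctly, and hence that the induced map is isometric rather than merely bounded.
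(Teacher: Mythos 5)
Your proposal is correct and takes essentially the same route as the paper's own proof: it transports $\varsigma$ through the composite of Kosaki's canonical representation and the Falcone--Takesaki flow-of-weights functor to get a trace-preserving $*$-isomorphism $\widetilde{\varsigma}:\core_1\ra\core_2$, extends it to the $\taucore$-measurable operators (where the paper invokes Corollary 38 of Terp rather than sketching the standard measure-topology argument you outline), and deduces isometry from the invariance of the distribution and rearrangement functions via \eqref{FK.property.one}--\eqref{FK.property.two}. The step you flag as the main obstacle is exactly the one the paper disposes of by citation, so no genuine gap remains.
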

\begin{proof}
By the Falcone--Takesaki construction, and its composition \eqref{ctft.cat.diag} with Kosaki's construction, $\varsigma:\N_1\ra\N_2$ induces a $*$-isomorphism $\widetilde{\varsigma}:\core_1\ra\core_2$ of semi-finite von Neumann algebras and a mapping $(\core_1,\RR,\tilde{\sigma}^1,\taucore_1)\ra(\core_2,\RR,\tilde{\sigma}^2,\taucore_2)$ satisfying
\begin{align}
	\widetilde{\varsigma}\circ\tilde{\sigma}^1_t&=\tilde{\sigma}^2_t\circ\widetilde{\varsigma}\;\;\forall t\in\RR,\label{widetilde.covariance.insanity.one}\\
	\taucore_1&=\taucore_2\circ\widetilde{\varsigma}.\label{widetilde.covariance.insanity.two}
\end{align}
By Collorary 38 in \cite{Terp:1981}, every $*$-isomorphism of semi-finite von Neumann algebras satisfying \eqref{widetilde.covariance.insanity.one} and \eqref{widetilde.covariance.insanity.two} extends to a topological $*$-isomorphism of corresponding spaces of $\tau$-measurable operators affiliated with these algebras, and this extension preserves the property \eqref{widetilde.covariance.insanity.two}. The $*$-isomorphism $\widetilde{\varsigma}$ extends to $\bar{\varsigma}:\MMM(\core_1,\taucore_1)\ra\MMM(\core_2,\taucore_2)$ by $\bar{\varsigma}(\cdot)=u(\cdot)u^*$, where $u$ is a unitary operator implementing $\widetilde{\varsigma}(\cdot)=u(\cdot)u^*$. It remains to show that $\bar{\varsigma}$ is an isometric isomorphism. Using \eqref{FK.property.one}, we can rewrite \eqref{nc.Orlicz.norm} as
\begin{equation}
	\n{x}_\Orlicz=\inf\{\lambda>0\mid\int_0^\infty\dd t\Orlicz(\lambda^{-1}\rearr{\ab{x}}{\taucore}(t))<\infty\},
\end{equation}
where
\begin{equation}
	\rearr{\ab{x}}{\taucore}(t)=\inf\{s\geq0\mid\taucore(\pvm^{\ab{x}}(]s,+\infty[))\leq t\}.
\end{equation}
For $\MMM(\core_2,\taucore_2)=\MMM(\widetilde{\varsigma}(\core_1),\taucore_1\circ\widetilde{\varsigma}^{-1})$ and $x\in\MMM(\core_1,\taucore_1)$ we have
\begin{equation}
\taucore_1\circ\widetilde{\varsigma}^{\,-1}\left(\pvm^{\ab{\widetilde{\varsigma}(x)}}(s,+\infty[)\right)=\taucore_1\circ\widetilde{\varsigma}^{\,-1}\circ\widetilde{\varsigma}(\pvm^{\ab{x}}(]s,+\infty[))=\taucore_1(\pvm^{\ab{x}}(]s,+\infty[)).
\end{equation}
Hence, $\bar{\varsigma}:L_\Orlicz(\core_1)\ra L_\Orlicz(\core_2)$ is an isometric isomorphism.
\end{proof}

\begin{corollary}
Denoting the category of noncommutative Orlicz spaces $L_\Orlicz(\core)$ with isometric isomorphisms by $\catname{nc\widetilde{L}}_\Orlicz\catname{Iso}$, we conclude that our construction determines a functor
\begin{equation}
	\mathrm{nc\widetilde{L}}_\Orlicz:\WsIso\ra\catname{nc\widetilde{L}}_\Orlicz\catname{Iso}.
\end{equation}
\end{corollary}

Following the results of Sherman \cite{Sherman:2005} (generalising earlier results of \cite{Yeadon:1981} and \cite{Watanabe:1992}), we end this section with an interesting problem: for which Orlicz functions $\Orlicz$ there exists a functor from $\catname{nc\widetilde{L}}_\Orlicz\catname{Iso}$ to the category of W$^*$-algebras with surjective Jordan $*$-isomorphisms? And, furthermore, for which $\Orlicz$ these functorial relationships can be extended to category $\catname{ncL}_\Orlicz\catname{Iso}$ of all noncommutative Orlicz spaces with isometric isomorphisms?
\section*{Acknowledgments}
I am very indebted to Professor Stanis{\l}aw L. Woronowicz for introducing me to Tomita--Takesaki and Falcone--Takesaki theories. I would like also to thank W{\l}adys{\l}aw Adam Majewski, Mustafa Muratov, Dmitri\u{\i} Pavlov, and David Sherman for discussion and correspondence, as well as two anonymous referees of the previous version of this paper for their comments. This research was supported in part by Perimeter Institute for Theoretical Physics. Research at Perimeter Institute is supported by the Government of Canada through Industry Canada and by the Province of Ontario through the Ministry of Research and Innovation.

\section*{References}
\addcontentsline{toc}{section}{References}
{%
\scriptsize


All Cyrillic titles and names were transliterated from the original papers and books. For the Latin transliteration of the Cyrillic script (in references and surnames) we use the following modification of the system GOST 7.79-2000B: {\cyrrm{ts}} = c, {\cyrrm{ch}} = ch, {\cyrrm{kh}} = kh, {\cyrrm{zh}} = zh, {\cyrrm{sh}} = sh, {\cyrrm{shch}} = shh, {\cyrrm{yu}} = yu, {\cyrrm{ya}} = ya, {\cyrrm{\"{e}}} = \"{e}, {\cyrrm{\cdprime}} = `, {\cyrrm{\cprime}} = ', {\cyrrm{\`{e}}} = \`{e}, {\cyrrm{\u{i}}} = \u{\i}, with an exception that names beginning with {\cyrrm{Kh}} are transliterated to H. For Russian texts: {\cyrrm{y}} = y, {\cyrrm{i}} = i; for Ukrainian: {\cyrrm{i}} = y, i = i, \"{\i} = \"{\i}.

\begingroup
\raggedright
\renewcommand\refname{\vskip -1cm}

\endgroup        
}%
\end{document}